\documentclass{amsart}
\usepackage{color}
\usepackage[utf8]{inputenc}
\usepackage{amscd}
\usepackage{amsmath}
\usepackage{bm}
\usepackage[english]{babel}
\usepackage{amsfonts}
\usepackage{graphicx}
\usepackage{mathtools}
\usepackage{vmargin}
\usepackage{theoremref}
\usepackage{amssymb}
\usepackage{commath}
\usepackage{cancel}
\usepackage{comment}
\usepackage{xcolor}
\definecolor{RED}{rgb}{1,0,0}
\usepackage{lineno}
\usepackage{enumitem}
\newtheorem{theorem}{Theorem}[section]

\newtheorem{definition}[theorem]{Definition}
\newtheorem{proposition}[theorem]{Proposition}

\newtheorem{example}[theorem]{Example}

\newtheorem{remark}[theorem]{Remark}
\newtheorem{assumption}[theorem]{Assumption}
\newcommand{\R}{\mathbb{R}}

\numberwithin{equation}{section}

\title[]{A Green Function Approach to Smooth Nonautonomous Topological Equivalence with Unbounded Nonlinearities under $(\mu,\nu)$--Dichotomies}
\author[]{}
\address{Universidad de Chile, Departamento de Matem\'aticas. Casilla 653, Santiago, Chile}

\author[]{Fernanda Torres}
\address{}
\email{fernanda.torres.t@ug.uchile.cl}
\subjclass[2020]{37C60, 37B25}
\keywords{Nonautonomous hyperbolicity, Green function, Topological equivalence, Unbounded nonlinearities}
\thanks{This research has been supported by FONDECYT Regular 1240361}
\date{\today}
\begin{document}


\begin{abstract}

We study the smooth topological equivalence between a nonautonomous linear system on the positive half-line and a quasilinear perturbation whose nonlinear part is not assumed to be globally bounded with respect to the state variable. The linear equation is assumed to admit a $(\mu,\nu)$--dichotomy, and the construction is carried out through the Green operator associated with this dichotomy. The usual global boundedness of the perturbation is replaced by locally uniform Green-integrability conditions along the relevant linear and nonlinear solutions. Under a smallness condition involving the global Lipschitz constant of the perturbation and the Green operator, we first construct Palmer-type maps which give a continuous topological equivalence on $\mathbb R^+$. We then impose Green-integrability conditions on the successive variational equations and a further first-order smallness condition which guarantees the invertibility of the derivative of the inverse map. Under these assumptions, the equivalence is of class $C^r$. We also provide a class of examples for which the perturbation is unbounded with respect to the space variable and all the hypotheses can be verified directly.

\end{abstract}

\maketitle

\section{Introduction}

The Hartman--Grobman Theorem \cite[Theorem I]{Hartman1} constitutes a fundamental result in the study of the local behavior of nonlinear dynamical systems. It guarantees the existence of a local topological conjugacy between the solutions of a nonlinear system and those of its linearization around a hyperbolic equilibrium point, establishing that both systems share the same topological structure in a neighborhood of such equilibrium. The extension of these ideas to the global setting was initiated by Pugh \cite{Pugh}, who examined a particular case of the Linearization Theorem focused on linear systems with bounded and Lipschitz perturbations, thereby enabling the construction of a global homeomorphism.

\subsection{Nonautonomous linearization}

The first nonautonomous linearization result traces back to the seminal work of Palmer \cite{Palmer}, which constructs upon Pugh's global approach. Subsequently, considerable attention has been devoted to establishing whether such nonautonomous linearizations possess some degree of differentiability. This question has attracted growing interest over the last decade, giving rise to a substantial body of work 
addressing it from diverse perspectives. To the best of our knowledge, the problem was first tackled by Casta\~neda and Robledo in \cite{JDE}, who, under suitable integrability conditions and without appealing to spectral theory, proved that the linearization is of 
class $C^2$ when the linear part is a uniform contraction on $\mathbb{R}$. A related result was subsequently obtained by the same authors together with Monz\'on in \cite{Monzon1}, where they established that the linearization is of class $C^r (r \geq 1)$ when the linear part admits a general nonuniform contraction on $\mathbb{R}^+$.

A first result concerning the differentiability of the linearization in the presence of both contraction and expansion, within the framework of spectral theory, was established by Dragicevic \textit{et al.} in \cite{Dragicevic2}. In that work, the differentiability of the linearization is obtained under the assumption that the linear part admits a strong nonuniform exponential dichotomy, meaning that the 
linear system possesses the nonuniform exponential dichotomy property and its transition matrix satisfies a nonuniform bounded growth 
condition on $\mathbb{R}^+$. Subsequently, in \cite{CMR}, the authors introduced the notion of \textit{continuous topological equivalence}, 
which, broadly speaking, requires the topological equivalence to depend continuously on both the time and space variables. Without appealing to spectral theory, they proved that if the linear system is a uniform contraction, then the linearization is of class $C^r(r \geq 1)$ and its partial derivatives are continuous in both variables. More recently, in \cite{Jara}, Jara proved that if the linear system admits a general nonuniform dichotomy and the nonlinear perturbation satisfies certain restrictive conditions, then the linearization is of class $C^2$ on $\mathbb{R}^+$, again without the use of spectral theory.

The global linearization results in \cite{CMR,JDE,Jara,Palmer, Reinfelds, Shi}, which are based on the Green function approach, consider a linear system and a quasilinear perturbation whose nonlinear part is globally Lipschitz and bounded.
From a comparative 
perspective, linearization results considering unbounded nonlinearities are less studied; results in this line are obtained by Lin \cite{FLIN}, in the uniform case, and Huerta \cite{Huerta} in the nonuniform case.

A first result of differentiability of the topological equivalence, concerning unbounded nonlinearities within the Green's approach was obtained by Casta\~neda and Torres in \cite{CT}. In that work, the linear system is a uniform contraction on $\R^+$ and the perturbation is allowed to be unbounded with respect to the space variable.

The purpose of this work is to continue the program initiated in \cite{CT}, replacing the uniform contraction by a $(\mu,\nu)$--dichotomy. The growth of the perturbation is controlled by a function $\omega(t,r)$, and this growth is compensated by locally uniform integrability properties of the Green operator along the relevant linear and nonlinear solutions.

The main contribution is twofold. First, we show that the Palmer-type construction remains valid under a $(\mu,\nu)$--dichotomy when the perturbation is unbounded, provided the Green operator satisfies suitable integrability assumptions and its interaction with the Lipschitz constant is sufficiently small. Second, we prove that the resulting topological equivalence is of class $C^r$. For this purpose, instead of assuming boundedness of the nonlinear solutions, we introduce locally uniform Green-integrability conditions for the successive variational equations. We also isolate a first-order smallness condition which guarantees that the derivative of the inverse equivalence is nonsingular at every point.

The differentiability assumptions are formulated directly in terms of the Green operator and the derivatives of the nonlinear solutions with respect to the initial conditions. This permits unstable directions and unbounded nonlinear solutions, since the growth of the variational equations may be compensated by the time decay of the derivatives of the perturbation.

\subsection{Structure of this paper}

The structure of this article is the following. Section 2 contains the $(\mu,\nu)$--dichotomy framework and the locally uniform Green-integrability assumptions replacing boundedness of the perturbation. Section 3 constructs the auxiliary functions $z^*$ and $w^*$ and establishes the continuous topological equivalence on $\R^+\times\R^n$. Section 4 introduces the Green-integrability conditions for the variational equations and proves that the equivalence is of class $C^r$. Section 5 provides a class of examples with unbounded nonlinearities. We finish with some comments concerning the scope of the result.

\subsection{Notations}
Throughout this paper, $||\cdot||$ and $|\cdot|$ will denote matrix and vector norms respectively. The set $[0,+\infty)$ is denoted by $\mathbb{R}^{+}$ and the set of square 
$n\times n$ matrices with real coefficients is denoted by $\mathcal{M}_{n}$, while $I_{n}$ is the identity matrix.

\section{Preliminary results and contextualization}

In this work, we consider a nonautonomous linear system

\begin{equation}\label{lin}
    x'=A(t)x
\end{equation}

and its quasilinear perturbation

\begin{equation}\label{nolin}
    y'=A(t)y+f(t,y).
\end{equation}

We denote by $t\mapsto x(t,\tau,\xi)$ and $t\mapsto y(t,\tau,\eta)$ the solutions of (\ref{lin}) and (\ref{nolin}) that pass through $\xi$ and $\eta$ respectively at $t=\tau$. We also denote by $T(t,s)$ the transition matrix of (\ref{lin}) such that for $t=s$ is $I_n$. Moreover, $A:\mathbb{R}^+\to \mathcal{M}_{n}$ is continuous and uniformly bounded, that is, there exists $\alpha>0$ such that
    \begin{equation}\label{acotamiento-A}
     \sup_{s \in \mathbb{R}^+}\norm{A(s)}=\alpha < +\infty. 
    \end{equation}

    \vspace{0.3cm}

\begin{definition}\label{dicotomia}
The system \eqref{lin} admits a $(\mu,\nu)$--dichotomy if there exist a continuous family of invariant projections $P(t)$ on $\mathbb R^n$, positive constants $M$ and $\lambda$, and continuous functions $\mu,\nu:\mathbb R^+\to[1,+\infty)$ such that $\mu$ is strictly increasing, and
$$
\mu(0)=1,
\qquad
\lim_{t\to+\infty}\mu(t)=+\infty.
$$
Moreover, setting $Q(t)=I-P(t)$, the transition matrix $T(t,\tau)$ satisfies
\begin{equation*}
\|T(t,\tau)P(\tau)\|
\leq
M\nu(\tau)
\left[\frac{\mu(t)}{\mu(\tau)}\right]^{-\lambda},
\qquad t\geq\tau,
\end{equation*}
and
\begin{equation*}
\|T(t,\tau)Q(\tau)\|
\leq
M\nu(\tau)
\left[\frac{\mu(\tau)}{\mu(t)}\right]^{-\lambda},
\qquad 0\leq t\leq\tau.
\end{equation*}
\end{definition}


To describe the bounded solutions of the corresponding nonhomogeneous system, it is convenient to introduce the Green function associated with the dichotomy. This function combines the forward evolution along the stable directions and the backward evolution along the unstable directions determined by the invariant projections $P(t)$ and $Q(t)$.

\begin{definition}
The Green function associated with system \eqref{lin} and its $(\mu,\nu)$-dichotomy is the matrix-valued function
$$
\mathcal{G}:\mathbb{R}^+\times\mathbb{R}^+\to\mathcal{M}_n(\mathbb{R})
$$
defined by
$$
\mathcal{G}(t,s)=
\begin{cases}
T(t,s)P(s) & t\geq s\geq 0 \\
-T(t,s)Q(s), & 0\leq t<s,
\end{cases}
$$
where $Q(t)=I-P(t)$ is the complementary projection.
\end{definition}

\vspace{0.3cm}

\begin{assumption}\label{hyp-main}
The following conditions hold.
\begin{enumerate}[label=\textnormal{(P\arabic*)},leftmargin=1.4cm]

\item\label{H1} The linear system \eqref{lin} admits a $(\mu,\nu)$--dichotomy in the sense of Definition \ref{dicotomia}.

\item\label{H2} The function $f:\mathbb R^+\times\mathbb R^n\to\mathbb R^n$ is continuous and there exists $\gamma>0$ such that
$$
|f(t,y)-f(t,\overline y)|
\leq
\gamma|y-\overline y|,
\qquad
t\geq0,\quad y,\overline y\in\mathbb R^n.
$$

\item\label{H3} There exists a continuous function
$$
\omega:\mathbb R^+\times\mathbb R^+\to\mathbb R^+
$$
which is nondecreasing in the second variable and satisfies
$$
|f(t,y)|\leq\omega(t,|y|),
\qquad
t\geq0,\quad y\in\mathbb R^n.
$$

\item\label{H4} The Green operator satisfies
\begin{equation*}
C:=
\sup_{t\geq0}
\int_0^{+\infty}\|\mathcal G(t,s)\|\,ds
<+\infty.
\end{equation*}

\item\label{H5} For every compact set $\mathcal K\subset\mathbb R^+\times\mathbb R^n$ and every $R>0$, there exists a function $m_{\mathcal K,R}\in L^1(\mathbb R^+)$ such that
\begin{equation}\label{Green-growth-x}
\sup_{(\tau,\xi)\in\mathcal K}
\sup_{t\geq0}
\|\mathcal G(t,s)\|
\omega\bigl(s,|x(s,\tau,\xi)|+R\bigr)
\leq
m_{\mathcal K,R}(s)
\end{equation}
for almost every $s\geq0$.

\item\label{H6} For every compact set $\mathcal K\subset\mathbb R^+\times\mathbb R^n$, there exists a function $n_{\mathcal K}\in L^1(\mathbb R^+)$ such that
\begin{equation}\label{Green-growth-y}
\sup_{(\tau,\eta)\in\mathcal K}
\sup_{t\geq0}
\|\mathcal G(t,s)\|
\omega\bigl(s,|y(s,\tau,\eta)|\bigr)
\leq
n_{\mathcal K}(s)
\end{equation}
for almost every $s\geq0$.

\item\label{H7} The condition
\begin{equation}\label{smallness}
q:=C\gamma<1
\end{equation}
is satisfied.

\end{enumerate}
\end{assumption}

\begin{remark}
Conditions \ref{H5} and \ref{H6} are locally uniform assumptions on the tails of the Green integrals. A bound of the form
$$
\int_0^{+\infty}\|\mathcal G(t,s)\|
\omega\bigl(s,|y(s,\tau,\eta)|\bigr)\,ds<+\infty
$$
for each fixed $(t,\tau,\eta)$ is not sufficient to apply the dominated convergence theorem to a sequence of initial conditions. The locally uniform majorants in \eqref{Green-growth-x} and \eqref{Green-growth-y} are precisely what will be used in the continuity argument.
\end{remark}

The following proposition is a standard result in the theory of differential equations concerning the continuous dependence of solutions on initial conditions. The proof for this proposition is inspired by \cite[Proposition 2]{FLIN}.

\begin{proposition}
The solutions of system \eqref{lin} satisfy
\begin{equation*}
|\xi-\overline\xi|e^{-\alpha|t-\tau|}
\leq
|x(t,\tau,\xi)-x(t,\tau,\overline\xi)|
\leq
|\xi-\overline\xi|e^{\alpha|t-\tau|}.
\end{equation*}
In particular,
$$
|\xi|e^{-\alpha|t-\tau|}
\leq
|x(t,\tau,\xi)|
\leq
|\xi|e^{\alpha|t-\tau|}.
$$
\end{proposition}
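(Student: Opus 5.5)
By linearity, $u(t):=x(t,\tau,\xi)-x(t,\tau,\overline\xi)$ is itself a solution of \eqref{lin}. It satisfies $u(\tau)=\xi-\overline\xi$, so $u(t)=T(t,\tau)(\xi-\overline\xi)$. The claim therefore reduces to two-sided bounds on $|u(t)|$ for a solution of $u'=A(t)u$ with $\|A(t)\|\le\alpha$, which is \eqref{acotamiento-A}. The second displayed inequality then follows by taking $\overline\xi=0$, since $x(t,\tau,0)\equiv0$.

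For the upper bound I would use the integral form $u(t)=u(\tau)+\int_\tau^t A(s)u(s)\,ds$. This gives $|u(t)|\le|u(\tau)|+\alpha\left|\int_\tau^t|u(s)|\,ds\right|$. Gronwall's inequality, applied forward for $t\ge\tau$ and backward (in the variable $\tau-t$) for $t\le\tau$, then yields $|u(t)|\le|u(\tau)|e^{\alpha|t-\tau|}$. This is legitimate on the whole of $\mathbb R^+$, since $t,\tau\ge0$ and $A$ is defined there.

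For the lower bound I would exchange the roles of $t$ and $\tau$. The same solution $u$ passes through $u(t)$ at time $t$, so $u(\tau)=T(\tau,t)u(t)$. Applying the upper bound just proved, with initial time $t$ and evaluation time $\tau$, gives $|u(\tau)|\le|u(t)|e^{\alpha|\tau-t|}$. Rearranging, we get $|\xi-\overline\xi|e^{-\alpha|t-\tau|}\le|u(t)|$.

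An alternative that avoids Gronwall is to differentiate $|u(t)|^2$. We have $\frac{d}{dt}|u|^2=2\langle u,A(t)u\rangle$, so $\left|\frac{d}{dt}|u|^2\right|\le2\alpha|u|^2$. Integrating this differential inequality gives both bounds at once, assuming the vector norm is Euclidean and the matrix norm is compatible with it. For a general norm I would instead stay with the Gronwall route, which only needs $|A(s)v|\le\|A(s)\||v|$.

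None of the steps is hard. The only point requiring care is the case $t<\tau$, where the Gronwall argument must be run backward in time. I would handle it through the change of variable $s\mapsto\tau-s$, or equivalently by applying the forward argument to $v(s)=u(\tau-s)$, which solves $v'=-A(\tau-s)v$ with the same bound $\alpha$.
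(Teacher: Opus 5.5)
Your proposal is correct and matches the paper's proof. You write the difference as $T(t,\tau)(\xi-\overline\xi)$, bound $\|T(t,\tau)\|\le e^{\alpha|t-\tau|}$ via $\|A(t)\|\le\alpha$ and Gronwall, obtain the lower bound by applying the same estimate to $u(\tau)=T(\tau,t)u(t)$, and set $\overline\xi=0$ for the last assertion; your explicit handling of the backward case $t<\tau$ and the alternative $|u|^2$ argument are sound extras.
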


\begin{proof}
Since
$$
x(t,\tau,\xi)-x(t,\tau,\overline\xi)
=
T(t,\tau)(\xi-\overline\xi),
$$
the upper estimate follows from \eqref{acotamiento-A} and Gronwall's inequality:
$$
\|T(t,\tau)\|\leq e^{\alpha|t-\tau|}.
$$
On the other hand,
$$
\xi-\overline\xi
=
T(\tau,t)\bigl(x(t,\tau,\xi)-x(t,\tau,\overline\xi)\bigr),
$$
and therefore
$$
|\xi-\overline\xi|
\leq
e^{\alpha|t-\tau|}
|x(t,\tau,\xi)-x(t,\tau,\overline\xi)|.
$$
This proves the lower estimate. The last assertion follows by taking $\overline\xi=0$.
\end{proof}

\vspace{0.5cm}

The following proposition ensures the continuity of the solutions $y(t,\tau,\eta)$ of the nonlinear system (\ref{nolin}) with respect to $(t,\tau,\eta)\in \R^+\times \R^+\times \R^n$. The proof of this proposition  can be found on \cite[Th.~7.1, p.~22]{Coddington}.

\begin{proposition}\label{global-y}
Assume \eqref{acotamiento-A} and \ref{H2}. For every $(\tau,\eta)\in\mathbb R^+\times\mathbb R^n$, the solution $y(t,\tau,\eta)$ of \eqref{nolin} is defined for every $t\geq0$. Moreover, the map
$$
(t,\tau,\eta)\mapsto y(t,\tau,\eta)
$$
is continuous on $\mathbb R^+\times\mathbb R^+\times\mathbb R^n$.
\end{proposition}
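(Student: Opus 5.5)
The plan is to treat \eqref{nolin} as an ODE $y'=F(t,y)$ with $F(t,y)=A(t)y+f(t,y)$. By continuity of $A$ and \ref{H2}, $F$ is continuous on $\mathbb R^+\times\mathbb R^n$ and globally Lipschitz in $y$, with constant $\alpha+\gamma$ by \eqref{acotamiento-A}. The Picard--Lindel\"of theorem then gives, for each $(\tau,\eta)$, a unique local solution through $\eta$ at $t=\tau$, and this solution extends to a maximal interval $J\subset\mathbb R^+$. To show $J=\mathbb R^+$, I will first bound the growth of $F$ linearly: $|F(t,y)|\le(\alpha+\gamma)|y|+|f(t,0)|$. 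Since $t\mapsto f(t,0)$ is continuous, it is bounded on every compact interval $[0,T]$. Writing the integral equation
$$y(t)=\eta+\int_\tau^t F(s,y(s))\,ds$$
and applying Gronwall's inequality on $[0,T]\cap J$ gives
$$|y(t)|\le\Bigl(|\eta|+|t-\tau|\sup_{[0,T]}|f(\cdot,0)|\Bigr)e^{(\alpha+\gamma)|t-\tau|}.$$
So the solution stays bounded on compact time sets. The standard continuation theorem, which says a solution leaving every compact set must blow up, then yields that $y(\cdot,\tau,\eta)$ is defined for every $t\ge0$, both forward and backward to $t=0$.

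For continuity of $(t,\tau,\eta)\mapsto y(t,\tau,\eta)$, I will compare two solutions directly. Take $(t,\tau,\eta)$ and $(\overline t,\overline\tau,\overline\eta)$, all times lying in a compact interval $[0,T]$. Set $u(t)=y(t,\tau,\eta)$ and $v(t)=y(t,\overline\tau,\overline\eta)$. Then
$$u(t)-v(t)=\eta-v(\tau)+\int_\tau^t\bigl(F(s,u)-F(s,v)\bigr)ds,$$
and Gronwall gives
$$|u(t)-v(t)|\le|\eta-v(\tau)|\,e^{(\alpha+\gamma)T}.$$
Next, $v(\tau)=v(\tau)-v(\overline\tau)+\overline\eta$, so
$$|\eta-v(\tau)|\le|\eta-\overline\eta|+|v(\tau)-v(\overline\tau)|.$$
The last term is at most $|\tau-\overline\tau|\sup_{[0,T]}|F(s,v(s))|$. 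This supremum is bounded uniformly for $\overline\eta$ in a bounded set, by the a priori bound from the first step.

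Finally, I split
$$|y(t,\tau,\eta)-y(\overline t,\overline\tau,\overline\eta)|\le|u(t)-v(t)|+|v(t)-v(\overline t)|.$$
The second term is again bounded by $|t-\overline t|$ times a uniform bound on $|F|$. Together these give joint (indeed locally Lipschitz) continuity.

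The only delicate point is that the uniform bounds must be local: $f$ is not bounded globally, so everything has to be done on compact time intervals with $\eta$ in bounded sets. There is also a domain caveat, since the equation is posed on $\mathbb R^+$ and backward integration stops at $t=0$. Neither issue is a real obstacle; this is exactly the content of \cite[Th.~7.1, p.~22]{Coddington}, and I would cite it after recording the Lipschitz and linear growth bounds.
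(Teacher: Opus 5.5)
Your argument is correct and is essentially the standard Lipschitz--Gronwall proof behind the textbook result the paper cites, \cite[Th.~7.1, p.~22]{Coddington}, without reproducing the argument. Your a priori bound from $|F(t,y)|\le(\alpha+\gamma)|y|+|f(t,0)|$ also makes global existence on $\mathbb R^+$ explicit, which the paper's bare citation leaves implicit.
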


\section{Topological equivalence}
\vspace{0.5cm}

The notion of topological equivalence between systems (\ref{lin}) and (\ref{nolin}) was introduced by Palmer in \cite{Palmer}. In broad terms, this concept requires the existence of a homeomorphism mapping solutions of the linear system to solutions of the nonlinear system and vice versa. In the present article, we adopt a weaker version of this notion.

\begin{definition}
The systems \eqref{lin} and \eqref{nolin} are said to be $\mathbb R^+$--continuously topologically equivalent if there exists a function
$$
H:\mathbb R^+\times\mathbb R^n\to\mathbb R^n
$$
such that:
\begin{itemize}
\item[(i)] if $x(t)$ is a solution of \eqref{lin}, then $H(t,x(t))$ is a solution of \eqref{nolin};
\item[(ii)] for each fixed $t\geq0$, $|H(t,\xi)|\to+\infty$ as $|\xi|\to+\infty$;
\item[(iii)] for each fixed $t\geq0$, the map $\xi\mapsto H(t,\xi)$ is a homeomorphism of $\mathbb R^n$;
\item[(iv)] the map $H$ is continuous on $\mathbb R^+\times\mathbb R^n$.
\end{itemize}
The inverse map
$$
G(t,\cdot)=H(t,\cdot)^{-1}
$$
is also required to be continuous on $\mathbb R^+\times\mathbb R^n$, to map solutions of \eqref{nolin} into solutions of \eqref{lin}, and to satisfy $|G(t,\eta)|\to+\infty$ as $|\eta|\to+\infty$ for every fixed $t\geq0$.
\end{definition}

Before stating the main theorem, we construct the maps $H$ and $G$. Let
$$
BC(\mathbb R^+,\mathbb R^n)
=
\left\{
\phi:\mathbb R^+\to\mathbb R^n:
\phi\text{ is continuous and }
\|\phi\|_\infty<+\infty
\right\}.
$$

By Proposition \ref{global-y}, the solution $s\mapsto y(s,\tau,\eta)$ is defined for every $s\geq0$. Therefore, for $(\tau,\eta)\in\mathbb R^+\times\mathbb R^n$, define
\begin{equation}\label{w-star}
w^*(t;(\tau,\eta))
=
-\int_0^{+\infty}
\mathcal G(t,s)f(s,y(s,\tau,\eta))\,ds.
\end{equation}
Condition \ref{H6} implies that $w^*(\cdot;(\tau,\eta))$ belongs to $BC(\mathbb R^+,\mathbb R^n)$.

For $(\tau,\xi)\in\mathbb R^+\times\mathbb R^n$, define the operator
$$
\Gamma_{(\tau,\xi)}:
BC(\mathbb R^+,\mathbb R^n)
\to
BC(\mathbb R^+,\mathbb R^n)
$$
by
\begin{equation*}
(\Gamma_{(\tau,\xi)}\phi)(t)
=
\int_0^{+\infty}
\mathcal G(t,s)
f(s,x(s,\tau,\xi)+\phi(s))\,ds.
\end{equation*}
Condition \ref{H5} shows that this operator is well defined. Moreover, for $\phi,\psi\in BC(\mathbb R^+,\mathbb R^n)$,
\begin{align*}
\|\Gamma_{(\tau,\xi)}\phi-\Gamma_{(\tau,\xi)}\psi\|_\infty
&\leq
\gamma
\sup_{t\geq0}
\int_0^{+\infty}
\|\mathcal G(t,s)\|
|\phi(s)-\psi(s)|\,ds\\
&\leq
C\gamma\|\phi-\psi\|_\infty
=
q\|\phi-\psi\|_\infty.
\end{align*}
Since $q<1$, the operator $\Gamma_{(\tau,\xi)}$ has a unique fixed point $z^*(\cdot;(\tau,\xi))\in BC(\mathbb R^+,\mathbb R^n)$, given by
\begin{equation}\label{z-star}
z^*(t;(\tau,\xi))
=
\int_0^{+\infty}
\mathcal G(t,s)
f(s,x(s,\tau,\xi)+z^*(s;(\tau,\xi)))\,ds.
\end{equation}

Splitting the Green integrals at $s=t$ and using the variation of parameters formula, we obtain
\begin{equation}\label{eq-z}
\frac{d}{dt}z^*(t;(\tau,\xi))
=
A(t)z^*(t;(\tau,\xi))
+
f(t,x(t,\tau,\xi)+z^*(t;(\tau,\xi)))
\end{equation}
and
\begin{equation}\label{eq-w}
\frac{d}{dt}w^*(t;(\tau,\eta))
=
A(t)w^*(t;(\tau,\eta))
-
f(t,y(t,\tau,\eta)).
\end{equation}
By Proposition \ref{global-y}, the nonlinear solutions appearing below are defined on $\mathbb R^+$. By uniqueness of the fixed point and uniqueness of solutions, the following consistency identities hold:
\begin{align}
z^*(r;(\tau,\xi))
&=
z^*(r;(t,x(t,\tau,\xi))),
\label{cons-z}\\
w^*(r;(\tau,\eta))
&=
w^*(r;(t,y(t,\tau,\eta)))
\label{cons-w}
\end{align}
for every $r,t,\tau\geq0$.

We now define
\begin{equation*}
H(t,\xi)
=
\xi+z^*(t;(t,\xi))
\end{equation*}
and
\begin{equation}\label{Homeo-G}
G(t,\eta)
=
\eta+w^*(t;(t,\eta)).
\end{equation}

\begin{theorem}\label{teorema1}
Assume that conditions \ref{H1}--\ref{H7} are satisfied. Then systems \eqref{lin} and \eqref{nolin} are $\mathbb R^+$--continuously topologically equivalent.
\end{theorem}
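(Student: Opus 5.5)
We follow Palmer's scheme. Throughout, the consistency identities \eqref{cons-z}--\eqref{cons-w} and the equations \eqref{eq-z}--\eqref{eq-w} do the work, and the majorants from \ref{H5}--\ref{H6} supply continuity. The argument has four steps.

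\textbf{Step 1: $H$ maps solutions of \eqref{lin} to solutions of \eqref{nolin}, and $G$ does the converse.} Let $x(t)=x(t,\tau,\xi)$. By \eqref{cons-z},
$$
H(t,x(t))=x(t,\tau,\xi)+z^*(t;(\tau,\xi)).
$$
By \eqref{eq-z}, this function satisfies \eqref{nolin}. Likewise, \eqref{cons-w} and \eqref{eq-w} show that
$$
G(t,y(t,\tau,\eta))=y(t,\tau,\eta)+w^*(t;(\tau,\eta))
$$
solves \eqref{lin}.

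\textbf{Step 2: $G(t,\cdot)$ and $H(t,\cdot)$ are mutually inverse.} Fix $t$ and $\xi$. Set
$$
y(s)=x(s,t,\xi)+z^*(s;(t,\xi)),
$$
which solves \eqref{nolin} with $y(t)=H(t,\xi)$. By uniqueness, $y(s)=y(s,t,H(t,\xi))$. Then
\begin{align*}
G(t,H(t,\xi))-\xi
&=z^*(t;(t,\xi))+w^*(t;(t,H(t,\xi)))\\
&=\int_0^{+\infty}\mathcal G(t,s)\bigl[f(s,y(s))-f(s,y(s))\bigr]\,ds=0.
\end{align*}
For the converse, fix $\eta$ and set $x(s)=y(s,t,\eta)+w^*(s;(t,\eta))$, which solves \eqref{lin}. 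Then
$$
\phi(s):=y(s,t,\eta)-x(s)=-w^*(s;(t,\eta))
$$
is bounded, and it satisfies
$$
\phi=\int_0^{+\infty}\mathcal G(\cdot,s)f(s,x(s)+\phi(s))\,ds .
$$
So $\phi$ is a fixed point of $\Gamma_{(t,G(t,\eta))}$. Uniqueness of the fixed point (this uses $q<1$) gives $\phi=z^*(\cdot;(t,G(t,\eta)))$, hence $H(t,G(t,\eta))=\eta$. So $H(t,\cdot)$ is a bijection with inverse $G(t,\cdot)$.

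\textbf{Step 3: properness.} From \eqref{z-star}, the Lipschitz bound \ref{H2} and the fixed point property,
$$
|z^*(t;(t,\xi))|\le \Bigl|\int_0^{+\infty}\mathcal G(t,s)f(s,x(s,t,\xi))\,ds\Bigr|+q\|z^*\|_\infty .
$$
This only shows that $z^*$ is bounded for each fixed $\xi$; it does not bound $z^*$ uniformly in $\xi$, since $f$ may be unbounded.

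For $H$, I would therefore argue through the inverse. Write $\eta=H(t,\xi)$, so $\xi=G(t,\eta)=\eta+w^*(t;(t,\eta))$. This gives
$$
|\xi|\le|\eta|+\int_0^{+\infty}\|\mathcal G(t,s)\|\,\omega\bigl(s,|y(s,t,\eta)|\bigr)\,ds .
$$
By \ref{H6} with $\mathcal K=\{t\}\times\bar B(0,\rho)$, the right-hand side stays bounded while $|\eta|\le\rho$. Hence if $|\xi|\to\infty$, then $|\eta|=|H(t,\xi)|\to\infty$. The symmetric argument, using \ref{H5} with $R$ equal to a bound for $z^*$ on a compact set, gives the properness of $G$. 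That bound is available once continuity in Step 4 is established, because a continuous function is bounded on compacts.

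\textbf{Step 4: continuity of $H$ and $G$ on $\R^+\times\R^n$.} This is the main obstacle. For $G$, take $(t_k,\eta_k)\to(t,\eta)$. The pairs $(t_k,\eta_k)$ lie in a compact set $\mathcal K$. By Proposition \ref{global-y}, $y(s,t_k,\eta_k)\to y(s,t,\eta)$ for each $s$, and $\mathcal G(t_k,s)\to\mathcal G(t,s)$ for $s\ne t$. The integrand is dominated by $n_{\mathcal K}(s)$ from \ref{H6}, so the dominated convergence theorem gives $w^*(t_k;(t_k,\eta_k))\to w^*(t;(t,\eta))$.

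For $H$, first obtain a uniform bound on $z^*(\cdot;(\tau,\xi))$ over compact $\mathcal K$. The contraction estimate gives
$$
\|z^*\|_\infty\le(1-q)^{-1}\sup_t\Bigl|\int_0^{+\infty}\mathcal G(t,s)f(s,x(s,\tau,\xi))\,ds\Bigr| ,
$$
and the right-hand side is bounded by $(1-q)^{-1}\|m_{\mathcal K,0}\|_{L^1}=:R$ using \ref{H5}. Now take $(\tau_k,\xi_k)\to(\tau,\xi)$ and set $\delta_k=\|z^*_k-z^*\|_\infty$. Then
$$
\delta_k\le q\,\delta_k+\sup_t\int_0^{+\infty}\|\mathcal G(t,s)\|\,\bigl|f(s,x_k(s)+z^*(s))-f(s,x(s)+z^*(s))\bigr|\,ds .
$$
The supremum in $t$ prevents using pointwise convergence alone. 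I would split the integral at a large $S$. The tail $s>S$ is controlled uniformly in $t$ by $2\int_S^\infty m_{\mathcal K,R}$. On $[0,S]$, I would use the Lipschitz bound $\gamma|x_k(s)-x(s)|$, which is uniformly small on compacts, together with $C$. This yields $\delta_k\to0$, and consequently $z^*(t_k;(t_k,\xi_k))\to z^*(t;(t,\xi))$ by continuity of $z^*(\cdot;(t,\xi))$. If the sup-in-$t$ estimate proves delicate, it suffices to prove pointwise convergence at the relevant time through the fixed-point identity evaluated on compact time intervals.

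Finally, continuity of $H(t,\cdot)$ and $G(t,\cdot)$ for fixed $t$, combined with Step 2, shows that each $H(t,\cdot)$ is a homeomorphism. This completes the proof of Theorem \ref{teorema1}.
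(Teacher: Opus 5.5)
Your proposal is correct and follows essentially the paper's route: the consistency identities give the solution-mapping property, uniqueness of the fixed point of $\Gamma_{(t,G(t,\eta))}$ gives mutual inversion, the contraction estimate plus dominated convergence with the majorants of \ref{H5}--\ref{H6} gives joint continuity, and properness is obtained through the inverse map (the paper argues by contradiction using continuity of $G(t,\cdot)$, where you give an explicit bound). The only variation is technical: for the continuity of $H$ the paper avoids your splitting at $S$ by bounding $\sup_{r\geq0}\int_0^{+\infty}\|\mathcal G(r,s)\|\,|f(s,x_k(s)+z^*(s))-f(s,x(s)+z^*(s))|\,ds$ by the integral of the supremum and applying dominated convergence directly with majorant $2m_{\mathcal K,R}$; also, since \ref{H5} is stated only for $R>0$ and $\omega$ is nondecreasing, your $m_{\mathcal K,0}$ should be replaced by, e.g., $m_{\mathcal K,1}$.
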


\begin{proof}
Let $x(t)=x(t,\tau,\xi)$ be a solution of \eqref{lin}. From \eqref{cons-z},
$$
H(t,x(t,\tau,\xi))
=
x(t,\tau,\xi)+z^*(t;(\tau,\xi)).
$$
Equations \eqref{lin} and \eqref{eq-z} show that the right-hand side is a solution of \eqref{nolin}. Similarly, if $y(t)=y(t,\tau,\eta)$ is a solution of \eqref{nolin}, then
$$
G(t,y(t,\tau,\eta))
=
y(t,\tau,\eta)+w^*(t;(\tau,\eta)),
$$
and \eqref{nolin} together with \eqref{eq-w} shows that this function is a solution of \eqref{lin}.

We next prove the continuity of $H$. Let $(t_n,\xi_n)\to(t,\xi)$ and write
$$
z_n=z^*(\cdot;(t_n,\xi_n)),
\qquad
z=z^*(\cdot;(t,\xi)).
$$
The fixed point identities and \eqref{smallness} give
\begin{align*}
\|z_n-z\|_\infty
&\leq
\|\Gamma_{(t_n,\xi_n)}z_n-\Gamma_{(t_n,\xi_n)}z\|_\infty
+
\|\Gamma_{(t_n,\xi_n)}z-\Gamma_{(t,\xi)}z\|_\infty\\
&\leq
q\|z_n-z\|_\infty
+
\|\Gamma_{(t_n,\xi_n)}z-\Gamma_{(t,\xi)}z\|_\infty.
\end{align*}
Therefore,
\begin{equation}\label{fixed-cont}
\|z_n-z\|_\infty
\leq
\frac{1}{1-q}
\|\Gamma_{(t_n,\xi_n)}z-\Gamma_{(t,\xi)}z\|_\infty.
\end{equation}
Choose a compact set $\mathcal K$ containing $(t_n,\xi_n)$ and
$(t,\xi)$, and take $R=\|z\|_\infty$. The continuous dependence of
the linear solutions gives
\begin{equation*}
f(s,x(s,t_n,\xi_n)+z(s))
\longrightarrow
f(s,x(s,t,\xi)+z(s))
\end{equation*}
for every $s\geq0$. Moreover, by condition \ref{H5},
\begin{align*}
&\sup_{r\geq0}\|\mathcal G(r,s)\|
\bigl|
f(s,x(s,t_n,\xi_n)+z(s))
-
f(s,x(s,t,\xi)+z(s))
\bigr|\\
&\qquad\leq
2m_{\mathcal K,R}(s)
\end{align*}
for almost every $s\geq0$. Notice that the first variable of
$\mathcal G$ is $r$, and not $t_n$, since we are estimating the
$BC(\mathbb R^+,\mathbb R^n)$--norm of
$\Gamma_{(t_n,\xi_n)}z-\Gamma_{(t,\xi)}z$. Hence,
\begin{align*}
\|\Gamma_{(t_n,\xi_n)}z-\Gamma_{(t,\xi)}z\|_\infty
&\leq
\int_0^{+\infty}
\sup_{r\geq0}\|\mathcal G(r,s)\|\\
&\quad\cdot
\bigl|
f(s,x(s,t_n,\xi_n)+z(s))
-
f(s,x(s,t,\xi)+z(s))
\bigr|\,ds
\longrightarrow0
\end{align*}
by the dominated convergence theorem.
It follows from \eqref{fixed-cont} that $\|z_n-z\|_\infty\to0$. Since $z$ is continuous,
\begin{align*}
|H(t_n,\xi_n)-H(t,\xi)|
&\leq
|\xi_n-\xi|
+
|z_n(t_n)-z(t)|\\
&\leq
|\xi_n-\xi|
+
\|z_n-z\|_\infty
+
|z(t_n)-z(t)|
\longrightarrow0.
\end{align*}
Thus, $H$ is continuous.

The continuity of $G$ follows in the same manner. Indeed, if $(t_n,\eta_n)\to(t,\eta)$, Proposition \ref{global-y}, condition \ref{H6}, and the dominated convergence theorem imply
$$
\|w^*(\cdot;(t_n,\eta_n))-w^*(\cdot;(t,\eta))\|_\infty
\longrightarrow0.
$$
Consequently,
$$
G(t_n,\eta_n)\longrightarrow G(t,\eta).
$$

It remains to prove that $H(t,\cdot)$ and $G(t,\cdot)$ are inverse maps. Fix $(t,\eta)$ and set
$$
y(s)=y(s,t,\eta),
\qquad
w(s)=w^*(s;(t,\eta)).
$$
By \eqref{eq-w}, the function $x(s)=y(s)+w(s)$ is a solution of \eqref{lin} and
$$
x(t)=\eta+w(t)=G(t,\eta).
$$
Furthermore, from \eqref{w-star},
\begin{align*}
-w(r)
&=
\int_0^{+\infty}
\mathcal G(r,s)f(s,y(s))\,ds\\
&=
\int_0^{+\infty}
\mathcal G(r,s)f(s,x(s)-w(s))\,ds.
\end{align*}

The choice of $-w$ is not arbitrary. Indeed, by the definitions of $G$ and $w$, we have
\begin{equation*}
x(r,t,G(t,\eta))-w(r;(t,\eta))=y(r,t,\eta).
\end{equation*}
Therefore,
\begin{align*}
\left(\Gamma_{(t,G(t,\eta))}(-w)\right)(r)
&=
\int_0^{+\infty}
\mathcal G(r,s)
f\left(s,x(s,t,G(t,\eta))-w(s;(t,\eta))\right)ds\\
&=
\int_0^{+\infty}
\mathcal G(r,s)
f(s,y(s,t,\eta))ds\\
&=
-w(r;(t,\eta)).
\end{align*}
Hence, $-w$ is a fixed point of $\Gamma_{(t,G(t,\eta))}$. By the uniqueness of the fixed point, we obtain
\begin{equation*}
z^*(r;(t,G(t,\eta)))
=
-w(r;(t,\eta)).
\end{equation*}
Consequently,
\begin{align*}
H(t,G(t,\eta))
&=
G(t,\eta)
+
z^*(t;(t,G(t,\eta)))\
&=
\eta+w(t;(t,\eta))-w(t;(t,\eta))\
&=
\eta.
\end{align*}

Hence, $-w$ is a fixed point of $\Gamma_{(t,G(t,\eta))}$. By uniqueness of the fixed point,
$$
z^*(r;(t,G(t,\eta)))=-w(r).
$$
Taking $r=t$, we obtain
$$
H(t,G(t,\eta))
=
G(t,\eta)-w(t)
=
\eta.
$$

Conversely, fix $(t,\xi)$, set
$$
x(s)=x(s,t,\xi),
\qquad
z(s)=z^*(s;(t,\xi)),
$$
and define $y(s)=x(s)+z(s)$. By \eqref{eq-z}, $y$ is a solution of \eqref{nolin} and $y(t)=H(t,\xi)$. Equation \eqref{z-star} shows that $-z$ coincides with $w^*(\cdot;(t,H(t,\xi)))$. Therefore,
$$
G(t,H(t,\xi))=\xi.
$$
Thus, for every fixed $t\geq0$, the maps $H(t,\cdot)$ and $G(t,\cdot)$ are mutually inverse homeomorphisms.

Finally, we verify the behavior at infinity. Suppose that $|\xi_n|\to+\infty$ but $\{H(t,\xi_n)\}$ is bounded. Passing to a subsequence, we may assume that $H(t,\xi_n)\to\eta$. Since $G(t,\cdot)$ is continuous,
$$
\xi_n
=
G(t,H(t,\xi_n))
\longrightarrow
G(t,\eta),
$$
which is a contradiction. Hence, $|H(t,\xi)|\to+\infty$ as $|\xi|\to+\infty$. The corresponding statement for $G$ follows by the same argument. This completes the proof.
\end{proof}

\section{Differentiability of the topological equivalence}

In this section, we establish the differentiability of the topological equivalence obtained in Theorem \ref{teorema1}. The main point is that the required estimates must be imposed on the Green operator acting on the variational equations. No boundedness of the nonlinear solutions on $\mathbb R^+$ will be used.

Fix an integer $r\geq1$. For $(t,\eta)\in\mathbb R^+\times\mathbb R^n$, set
$$
Y_j(s;t,\eta):=D_\eta^j y(s,t,\eta),
\qquad 1\leq j\leq r.
$$
Thus, $Y_j(s;t,\eta)$ is a symmetric $j$--linear map. Whenever $f(t,\cdot)$ is of class $C^r$, define
$$
\mathcal F_j(s;t,\eta)
:=
D_\eta^j\bigl[f(s,y(s,t,\eta))\bigr].
$$
In particular,
\begin{align*}
\mathcal F_1(s;t,\eta)
&=
D_yf(s,y(s,t,\eta))Y_1(s;t,\eta),\\
\mathcal F_2(s;t,\eta)
&=
D_y^2f(s,y(s,t,\eta))[Y_1(s;t,\eta),Y_1(s;t,\eta)]\\
&\quad +D_yf(s,y(s,t,\eta))Y_2(s;t,\eta).
\end{align*}
More generally, the Fa\`a di Bruno formula gives
\begin{equation}\label{F-j}
\mathcal F_j(s;t,\eta)
=
\sum_{k=1}^j
D_y^k f(s,y(s,t,\eta))
\mathfrak B_{j,k}
\bigl(Y_1(s;t,\eta),\ldots,Y_{j-k+1}(s;t,\eta)\bigr),
\end{equation}
where $\mathfrak B_{j,k}$ denotes the corresponding multilinear Bell polynomial.

\begin{assumption}\label{hyp-regularity}
The following conditions hold.
\begin{enumerate}[label=\textnormal{(R\arabic*)},leftmargin=1.4cm]

\item\label{R1} For every $t\geq0$, the function $f(t,\cdot)$ is of class $C^r$, and the derivatives
$$
(t,y)\mapsto D_y^j f(t,y),
\qquad 1\leq j\leq r,
$$
are continuous on $\mathbb R^+\times\mathbb R^n$.

\item\label{R2} For every compact set $\mathcal K\subset\mathbb R^+\times\mathbb R^n$ and every $1\leq j\leq r$, there exists a function $\ell_{\mathcal K,j}\in L^1(\mathbb R^+)$ such that
\begin{equation}\label{regularity-majorant}
\sup_{(t,\eta)\in\mathcal K}
\left\|
\mathcal G(t,s)\mathcal F_j(s;t,\eta)
\right\|
\leq
\ell_{\mathcal K,j}(s)
\end{equation}
for almost every $s\geq0$.

\item\label{R3} There exists $\vartheta\in(0,1)$ such that
\begin{equation*}
\sup_{(t,\eta)\in\mathbb R^+\times\mathbb R^n}
\int_0^{+\infty}
\left\|
\mathcal G(t,s)
D_yf(s,y(s,t,\eta))Y_1(s;t,\eta)
\right\|\,ds
\leq\vartheta.
\end{equation*}

\end{enumerate}
\end{assumption}

\begin{remark}
Condition \ref{R2} is a Green-integrability condition for the variational equations. It does not require the functions $s\mapsto y(s,t,\eta)$ or $s\mapsto Y_j(s;t,\eta)$ to be bounded. Their possible growth is allowed, provided it is compensated by the Green operator and by the time decay of the derivatives of $f$.

A directly verifiable sufficient condition for \ref{R2} is the following. We denote by $\pi_1(\mathcal K)$ the projection of $\mathcal K$ onto the time variable. For every compact set $\mathcal K$ and every $1\leq j\leq r$, assume that there exist nonnegative functions $a_{\mathcal K,k}$ and $b_{\mathcal K,m}$ such that
$$
\sup_{(t,\eta)\in\mathcal K}
\|D_y^k f(s,y(s,t,\eta))\|
\leq a_{\mathcal K,k}(s),
$$
$$
\sup_{(t,\eta)\in\mathcal K}
\|Y_m(s;t,\eta)\|
\leq b_{\mathcal K,m}(s),
$$
and
$$
\sup_{t\in\pi_1(\mathcal K)}\|\mathcal G(t,s)\|
\sum_{k=1}^j
a_{\mathcal K,k}(s)
\mathfrak B_{j,k}
\bigl(b_{\mathcal K,1}(s),\ldots,b_{\mathcal K,j-k+1}(s)\bigr)
\in L^1(\mathbb R^+).
$$
Then \eqref{regularity-majorant} follows from \eqref{F-j}.
\end{remark}

\begin{remark}
Condition \ref{R3} is only used to guarantee that $D_\eta G(t,\eta)$ is invertible at every point. It can be replaced by the weaker assumption
$$
\det D_\eta G(t,\eta)\neq0,
\qquad
(t,\eta)\in\mathbb R^+\times\mathbb R^n,
$$
whenever this property can be verified directly.
\end{remark}

\begin{definition}
The systems \eqref{lin} and \eqref{nolin} are said to be $C^r$--continuously topologically equivalent on $\mathbb R^+$ if they are $\mathbb R^+$--continuously topologically equivalent and the following conditions hold:
\begin{itemize}
\item[(i)] for each fixed $t\geq0$, the map $\xi\mapsto H(t,\xi)$ is a $C^r$--diffeomorphism of $\mathbb R^n$;
\item[(ii)] the derivatives of $H$ and $G$ with respect to the space variable, up to order $r$, are continuous functions on $\mathbb R^+\times\mathbb R^n$.
\end{itemize}
\end{definition}

\begin{theorem}
Assume that conditions \ref{H1}--\ref{H7} and \ref{R1}--\ref{R3} are satisfied. Then systems \eqref{lin} and \eqref{nolin} are $C^r$--continuously topologically equivalent on $\mathbb R^+$.
\end{theorem}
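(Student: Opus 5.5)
The plan is to work with the inverse map $G$, whose formula \eqref{Homeo-G} involves only the nonlinear solutions and no fixed point. We then transfer regularity to $H$ through the inverse function theorem. Write
\[
G(t,\eta)=\eta-\int_0^{+\infty}\mathcal G(t,s)f(s,y(s,t,\eta))\,ds .
\]
By \ref{R1} and the classical differentiability theorem for ODEs, $\eta\mapsto y(s,t,\eta)$ is $C^r$. The derivatives $Y_j(s;t,\eta)$ are continuous in $(s,t,\eta)$, since they solve the successive variational equations, whose coefficients are continuous by \ref{R1}. Hence $\mathcal F_j(s;t,\eta)$, given by \eqref{F-j}, is continuous in $(s,t,\eta)$.

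The first step is to differentiate under the integral sign. Fix $(t_0,\eta_0)$ and a compact neighbourhood $\mathcal K$. Proceeding by induction on $j$, we use the mean value theorem in $\eta$. The difference quotients of $\mathcal G(t,s)\mathcal F_{j-1}(s;t,\eta)$ are bounded by $\sup_{\mathcal K}\|\mathcal G(t,s)\mathcal F_j\|\le \ell_{\mathcal K,j}(s)$ from \ref{R2}. For $j=1$, the function $\mathcal F_0=f(s,y)$ is dominated via \ref{H6}. Dominated convergence then gives, for $1\le j\le r$,
\[
D^j_\eta G(t,\eta)=\delta_{j1}I_n-\int_0^{+\infty}\mathcal G(t,s)\mathcal F_j(s;t,\eta)\,ds .
\]
Joint continuity in $(t,\eta)$ follows from the same domination. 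Pointwise continuity of $\mathcal G(t,s)$ in $t$ for $s\neq t$, which is a null set, together with continuity of $\mathcal F_j$, and the majorant $\ell_{\mathcal K,j}$ from \ref{R2}, yield continuity of the integral by dominated convergence. So $G(t,\cdot)\in C^r$ and its space derivatives up to order $r$ are continuous on $\mathbb R^+\times\mathbb R^n$. One subtle point here: \ref{R2} bounds $\mathcal G(t,s)$ only at the same $t$ as the initial time. This is exactly the form needed, since the $t$ appearing in $\mathcal G$ and in $y(s,t,\eta)$ coincide in $G$.

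The second step is invertibility. By \ref{R3}, $\|D_\eta G(t,\eta)-I_n\|\le\vartheta<1$, so $D_\eta G(t,\eta)$ is invertible via a Neumann series, with $\|D_\eta G^{-1}\|\le (1-\vartheta)^{-1}$. Since $G(t,\cdot)$ is a homeomorphism by Theorem \ref{teorema1}, the inverse function theorem shows that $H(t,\cdot)=G(t,\cdot)^{-1}$ is $C^r$. Thus $H(t,\cdot)$ is a $C^r$-diffeomorphism, with $D_\xi H(t,\xi)=[D_\eta G(t,H(t,\xi))]^{-1}$.

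The third step is continuity of the derivatives of $H$ in $(t,\xi)$. Higher derivatives $D^j_\xi H$ are universal polynomial expressions in $[D_\eta G]^{-1}$ and $D^k_\eta G$ for $k\le j$, all evaluated at $(t,H(t,\xi))$. Matrix inversion is continuous on invertible matrices, $H$ is jointly continuous by Theorem \ref{teorema1}, and the $D^k_\eta G$ are jointly continuous by step one. Composing these gives joint continuity of $D^j_\xi H$.

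I expect the main obstacle to be the rigorous justification of differentiation under the integral at order $j$. The domination must hold uniformly for $\eta$ along segments inside $\mathcal K$ and must cover the mean-value intermediate points, so $\mathcal K$ should be chosen convex in $\eta$. A second difficulty is the lack of an a priori global estimate for $Y_j$, which \ref{R2} deliberately sidesteps; we handle this by working only with the combined quantity $\mathcal G\mathcal F_j$ and never bounding the factors separately.
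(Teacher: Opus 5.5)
Your proposal is correct and follows essentially the same route as the paper. You differentiate the explicit integral formula for $G$ under the integral sign using the majorants from (R2), obtain joint continuity of $D^j_\eta G$ by dominated convergence (off the null set $\{s=t\}$), invert $D_\eta G$ via (R3), and transfer $C^r$ regularity and joint continuity to $H$ through the inverse function theorem and $G(t,H(t,\xi))=\xi$; your extra steps (the mean value argument on a $\mathcal K$ convex in $\eta$, and using (H6) for integrability of the undifferentiated integrand) only fill in details the paper states briefly.
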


\begin{proof}
By Theorem \ref{teorema1}, the systems are $\mathbb R^+$--continuously topologically equivalent. We first prove the regularity of $G$. By Proposition \ref{global-y}, the solution $s\mapsto y(s,t,\eta)$ is defined on $\mathbb R^+$ for every $(t,\eta)\in\mathbb R^+\times\mathbb R^n$. From \eqref{Homeo-G},
\begin{equation}\label{G-integral-smooth}
G(t,\eta)
=
\eta-
\int_0^{+\infty}
\mathcal G(t,s)f(s,y(s,t,\eta))\,ds.
\end{equation}
Condition \ref{R1} and the standard differentiability theorem for solutions with respect to initial conditions imply that, for each fixed $s,t\geq0$, the map
$$
\eta\mapsto y(s,t,\eta)
$$
is of class $C^r$. Consequently, the map
$$
\eta\mapsto f(s,y(s,t,\eta))
$$
is of class $C^r$, and its $j$--th derivative is $\mathcal F_j(s;t,\eta)$.

Let $(t_0,\eta_0)\in\mathbb R^+\times\mathbb R^n$. Choose a compact neighborhood $\mathcal K$ of $(t_0,\eta_0)$. By \ref{R2}, every derivative of the integrand in \eqref{G-integral-smooth}, up to order $r$, is bounded on $\mathcal K$ by an integrable function. Therefore, differentiation under the integral sign gives
\begin{equation}\label{DG-new}
D_\eta G(t,\eta)
=
I_n-
\int_0^{+\infty}
\mathcal G(t,s)\mathcal F_1(s;t,\eta)\,ds
\end{equation}
and, for $2\leq j\leq r$,
\begin{equation}\label{DjG-new}
D_\eta^jG(t,\eta)
=
-
\int_0^{+\infty}
\mathcal G(t,s)\mathcal F_j(s;t,\eta)\,ds.
\end{equation}
Thus, for each fixed $t\geq0$, the map $\eta\mapsto G(t,\eta)$ is of class $C^r$.

We now prove that the derivatives in \eqref{DG-new}--\eqref{DjG-new} are continuous with respect to $(t,\eta)$. Let $(t_m,\eta_m)\to(t,\eta)$ and choose a compact set $\mathcal K$ containing this sequence and its limit. For every $s\neq t$, the continuity of the transition matrix and of the projections implies
$$
\mathcal G(t_m,s)\longrightarrow\mathcal G(t,s).
$$
The possible discontinuity at $s=t$ occurs at a set of measure zero. Moreover, the continuous dependence of the variational equations gives
$$
\mathcal F_j(s;t_m,\eta_m)
\longrightarrow
\mathcal F_j(s;t,\eta)
$$
for every fixed $s\geq0$. Condition \ref{R2} and the dominated convergence theorem yield
$$
D_\eta^jG(t_m,\eta_m)
\longrightarrow
D_\eta^jG(t,\eta),
\qquad 1\leq j\leq r.
$$
Hence, the derivatives of $G$ with respect to $\eta$, up to order $r$, are continuous functions of $(t,\eta)$.

From \eqref{DG-new} and \ref{R3},
$$
\|D_\eta G(t,\eta)-I_n\|
\leq\vartheta<1.
$$
Therefore, $D_\eta G(t,\eta)$ is invertible for every $(t,\eta)$, and
$$
\|D_\eta G(t,\eta)^{-1}\|
\leq\frac{1}{1-\vartheta}.
$$
For each fixed $t\geq0$, the map $G(t,\cdot)$ is already a homeomorphism by Theorem \ref{teorema1}. Since it is also a local $C^r$--diffeomorphism at every point, its inverse $H(t,\cdot)$ is a global $C^r$--diffeomorphism.

It remains to prove the joint continuity of the derivatives of $H$. Differentiating
$$
G(t,H(t,\xi))=\xi
$$
with respect to $\xi$, we obtain
\begin{equation*}
D_\xi H(t,\xi)
=
\left[D_\eta G(t,H(t,\xi))\right]^{-1}.
\end{equation*}
Since $H$, $D_\eta G$ and the inversion map on the set of invertible matrices are continuous, $D_\xi H$ is continuous on $\mathbb R^+\times\mathbb R^n$.

For $2\leq j\leq r$, differentiating the identity $G(t,H(t,\xi))=\xi$ $j$ times, the term containing the highest derivative of $H$ is
$$
D_\eta G(t,H(t,\xi))D_\xi^jH(t,\xi),
$$
and all the remaining terms depend only on derivatives of $G$ of order at most $j$ and derivatives of $H$ of order at most $j-1$. Since $D_\eta G(t,H(t,\xi))$ is invertible, $D_\xi^jH(t,\xi)$ can be solved recursively. An induction on $j$, together with the continuity already established for the derivatives of $G$, shows that all derivatives $D_\xi^jH$, $1\leq j\leq r$, are continuous on $\mathbb R^+\times\mathbb R^n$. This proves the result.
\end{proof}

\section{A class of examples}

The following example shows that the hypotheses allow perturbations which are unbounded with respect to the space variable and, at the same time, produce a $C^r$--continuous topological equivalence. Let $a,b,\beta,\varepsilon>0$ and consider
\begin{equation}\label{example-linear}
A=
\begin{pmatrix}
-a&0\\
0&b
\end{pmatrix}
\end{equation}
and
\begin{equation}\label{example-f}
f(t,y)
=
\varepsilon e^{-\beta t}
\begin{pmatrix}
y_1+\sin y_1\\
y_2+\sin y_2
\end{pmatrix}.
\end{equation}
The transition matrix is
$$
T(t,s)
=
\begin{pmatrix}
e^{-a(t-s)}&0\\
0&e^{b(t-s)}
\end{pmatrix}.
$$
Taking
$$
P=
\begin{pmatrix}
1&0\\
0&0
\end{pmatrix},
\qquad
Q=
\begin{pmatrix}
0&0\\
0&1
\end{pmatrix},
$$
system \eqref{example-linear} admits a $(\mu,\nu)$--dichotomy with
$$
\mu(t)=e^t,
\qquad
\nu(t)=1,
\qquad
M=1,
\qquad
\lambda=\min\{a,b\}.
$$
Moreover,
$$
\sup_{t\geq0}
\int_0^{+\infty}\|\mathcal G(t,s)\|\,ds
\leq
\frac{1}{a}+\frac{1}{b}.
$$
The perturbation \eqref{example-f} is unbounded with respect to $y$ for every fixed $t\geq0$, and it satisfies
$$
|f(t,y)-f(t,\overline y)|
\leq
2\varepsilon|y-\overline y|.
$$
Thus, we may take
$$
\gamma=2\varepsilon
\qquad\text{and}\qquad
\omega(t,\abs{y})=\varepsilon e^{-\beta t}(\abs{y}+\sqrt{2}).
$$

Set $\rho=\max\{a,b\}$. For every compact set of initial conditions, there exists a constant $L>0$ such that the corresponding linear and nonlinear solutions satisfy
$$
|x(s,\tau,\xi)|+|y(s,\tau,\eta)|
\leq
Le^{\rho s},
\qquad s\geq0.
$$
The estimate for the nonlinear solution follows from Gronwall's inequality and the integrability of $e^{-\beta s}$. Hence, if $\beta>\rho$, then conditions \ref{H5} and \ref{H6} hold. Therefore, the topological equivalence follows whenever
\begin{equation}\label{example-top-small}
2\varepsilon\left(\frac{1}{a}+\frac{1}{b}\right)<1.
\end{equation}

We now verify the differentiability hypotheses. Since the system is decoupled, the mixed derivatives of the solutions with respect to the initial conditions vanish. Moreover, all derivatives of $y_i+\sin y_i$ of order at least one are bounded, and
$$
\|D_yf(s,y)\|
\leq2\varepsilon e^{-\beta s},
\qquad
\|D_y^k f(s,y)\|
\leq\varepsilon e^{-\beta s},
\quad k\geq2.
$$
For the first variational equation, on the support of each component of the Green function, the exponential factor of the linear transition matrix cancels with the corresponding factor of $\mathcal G(t,s)$. Consequently,
$$
\left\|
\mathcal G(t,s)
D_yf(s,y(s,t,\eta))Y_1(s;t,\eta)
\right\|
\leq
2\varepsilon e^{2\varepsilon/\beta}e^{-\beta s}.
$$
Thus, condition \ref{R3} holds if
\begin{equation}\label{example-diff-small}
\frac{2\varepsilon}{\beta}e^{2\varepsilon/\beta}<1.
\end{equation}

For the higher-order variational equations, an induction on $j$ shows that, for every compact set $\mathcal K\subset\mathbb R^+\times\mathbb R^n$, there exists $C_{\mathcal K,j}>0$ such that the unstable component satisfies
$$
\|Y_j(s;t,\eta)\|
\leq
C_{\mathcal K,j}e^{jb(s-t)},
\qquad
s\geq t,
\quad
(t,\eta)\in\mathcal K.
$$
The stable component only contributes when $0\leq s\leq t$, which is a bounded interval when $(t,\eta)$ varies in a compact set. Using \eqref{F-j}, for $s\geq t$ we obtain
$$
\|\mathcal G(t,s)\mathcal F_j(s;t,\eta)\|
\leq
C_{\mathcal K,j}
e^{-\beta s}e^{(j-1)b(s-t)}.
$$
Therefore, condition \ref{R2} holds for every $1\leq j\leq r$ provided
\begin{equation*}
\beta>(r-1)b.
\end{equation*}

We conclude that, if
$$
\beta>\max\{\rho,(r-1)b\},
$$
and conditions \eqref{example-top-small} and \eqref{example-diff-small} are satisfied, then systems \eqref{example-linear} and \eqref{nolin}, with the perturbation given by \eqref{example-f}, are $C^r$--continuously topologically equivalent on $\mathbb R^+$.

\begin{example}
The following example considers a nonautonomous linear system with a growth rate containing both exponential and polynomial factors. Fix $a,b,\kappa,\beta,\varepsilon>0$ and define
$$
h(t):=1+\frac{\kappa}{1+t}.
$$
Consider the linear system
\begin{equation}\label{example-mixed-linear}
x'=A(t)x,
\qquad
A(t)=
\begin{pmatrix}
-ah(t)&0\\
0&bh(t)
\end{pmatrix},
\end{equation}
together with the perturbation
\begin{equation}\label{example-mixed-f}
f(t,y)
=
\varepsilon e^{-\beta t}
\begin{pmatrix}
y_1+\sin y_1\\
y_2+\sin y_2
\end{pmatrix}.
\end{equation}
Notice that $A$ is continuous and uniformly bounded, since
$$
\sup_{t\geq0}\|A(t)\|
\leq
(1+\kappa)\max\{a,b\}.
$$

Set
$$
\mu(t):=e^t(1+t)^\kappa.
$$
Since
$$
\frac{\mu'(t)}{\mu(t)}
=
1+\frac{\kappa}{1+t}
=
h(t),
$$
the transition matrix of system \eqref{example-mixed-linear} is
$$
T(t,s)
=
\begin{pmatrix}
\left[\dfrac{\mu(t)}{\mu(s)}\right]^{-a}&0\\[2mm]
0&
\left[\dfrac{\mu(t)}{\mu(s)}\right]^b
\end{pmatrix}.
$$
Taking
$$
P=
\begin{pmatrix}
1&0\\
0&0
\end{pmatrix},
\qquad
Q=
\begin{pmatrix}
0&0\\
0&1
\end{pmatrix},
$$
system \eqref{example-mixed-linear} admits a $(\mu,\nu)$--dichotomy with
$$
\nu(t)=1,
\qquad
M=1,
\qquad
\lambda=\min\{a,b\}.
$$
This dichotomy is generated by the mixed exponential--polynomial growth rate
$$
\mu(t)=e^t(1+t)^\kappa,
$$
and therefore it is different from the purely exponential rate considered in the previous example.

The corresponding Green function is given by
$$
\mathcal G(t,s)
=
\begin{cases}
\begin{pmatrix}
\left[\dfrac{\mu(t)}{\mu(s)}\right]^{-a}&0\\
0&0
\end{pmatrix},
&0\leq s\leq t,\\[5mm]
-
\begin{pmatrix}
0&0\\
0&
\left[\dfrac{\mu(t)}{\mu(s)}\right]^b
\end{pmatrix},
&0\leq t<s.
\end{cases}
$$
Since
$$
\frac{\mu(t)}{\mu(s)}
=
e^{t-s}
\left(\frac{1+t}{1+s}\right)^\kappa,
$$
we have
$$
\left[\frac{\mu(t)}{\mu(s)}\right]^{-a}
\leq e^{-a(t-s)},
\qquad
t\geq s,
$$
and
$$
\left[\frac{\mu(t)}{\mu(s)}\right]^b
\leq e^{-b(s-t)},
\qquad
t<s.
$$
Consequently,
\begin{align*}
\sup_{t\geq0}
\int_0^{+\infty}\|\mathcal G(t,s)\|\,ds
&\leq
\sup_{t\geq0}
\left(
\int_0^t e^{-a(t-s)}\,ds
+
\int_t^{+\infty}e^{-b(s-t)}\,ds
\right)\\
&\leq
\frac{1}{a}+\frac{1}{b}.
\end{align*}
Hence, condition \ref{H4} is satisfied with
$$
C\leq\frac{1}{a}+\frac{1}{b}.
$$

The perturbation \eqref{example-mixed-f} is unbounded with respect to the space variable for every fixed $t\geq0$. Moreover,
$$
|f(t,y)-f(t,\overline y)|
\leq
2\varepsilon e^{-\beta t}|y-\overline y|
\leq
2\varepsilon|y-\overline y|.
$$
Therefore, condition \ref{H2} holds with
$$
\gamma=2\varepsilon.
$$
Condition \ref{H3} holds with
$$
\omega(t,r)
=
\varepsilon e^{-\beta t}(r+\sqrt{2}).
$$

We next verify conditions \ref{H5} and \ref{H6}. For every compact set of initial conditions, the variation of constants formula and Gronwall's inequality imply that there exists $L>0$ such that
\begin{equation}\label{mixed-solution-bound}
|x(s,\tau,\xi)|+|y(s,\tau,\eta)|
\leq
L\left(1+\mu(s)^b\right),
\qquad
s\geq0.
\end{equation}
Indeed, the perturbation contributes the bounded factor
$$
\exp\left(
2\varepsilon\int_0^{+\infty}e^{-\beta u}\,du
\right)
=
e^{2\varepsilon/\beta},
$$
while the possible forward growth is determined by the unstable component of the linear system.

On the other hand,
$$
\sup_{t\geq0}\|\mathcal G(t,s)\|\leq1.
$$
Using \eqref{mixed-solution-bound}, we obtain
\begin{align*}
\|\mathcal G(t,s)\|
\omega\bigl(s,|x(s,\tau,\xi)|+R\bigr)
&\leq
C_1e^{-\beta s}
\left(1+\mu(s)^b\right)\\
&=
C_1e^{-\beta s}
\left(1+e^{bs}(1+s)^{b\kappa}\right),
\end{align*}
and an analogous estimate holds along the nonlinear solutions. Therefore, conditions \ref{H5} and \ref{H6} are satisfied whenever
\begin{equation*}
\beta>b.
\end{equation*}

The topological smallness condition \ref{H7} follows if
\begin{equation}\label{mixed-top-small}
2\varepsilon
\left(
\frac{1}{a}+\frac{1}{b}
\right)
<1.
\end{equation}

We now verify the differentiability assumptions. The function $f(t,\cdot)$ is of class $C^\infty$, and
$$
\|D_yf(s,y)\|
\leq
2\varepsilon e^{-\beta s},
$$
while
$$
\|D_y^k f(s,y)\|
\leq
\varepsilon e^{-\beta s},
\qquad
k\geq2.
$$
Thus, condition \ref{R1} is satisfied for every $r\geq1$.

Since the system is diagonal, the first variational equation is also diagonal. On the support of each component of the Green function, the corresponding factor of the linear transition matrix cancels with the factor contained in $Y_1(s;t,\eta)$. Furthermore,
$$
\exp\left(
2\varepsilon
\left|
\int_t^s e^{-\beta u}\,du
\right|
\right)
\leq
e^{2\varepsilon/\beta}.
$$
It follows that
$$
\left\|
\mathcal G(t,s)
D_yf(s,y(s,t,\eta))
Y_1(s;t,\eta)
\right\|
\leq
2\varepsilon e^{2\varepsilon/\beta}e^{-\beta s}.
$$
Consequently, condition \ref{R3} is satisfied if
\begin{equation}\label{mixed-diff-small}
\frac{2\varepsilon}{\beta}
e^{2\varepsilon/\beta}
<1.
\end{equation}

For the higher-order variational equations, an induction on $j$ shows that, for every compact set $\mathcal K\subset\mathbb R^+\times\mathbb R^n$, there exists $C_{\mathcal K,j}>0$ such that
$$
\|Y_j(s;t,\eta)\|
\leq
C_{\mathcal K,j}
\left[
\frac{\mu(s)}{\mu(t)}
\right]^{jb},
\qquad
s\geq t,
\quad
(t,\eta)\in\mathcal K.
$$
Using the Fa\`a di Bruno formula, we obtain
\begin{align*}
\|\mathcal G(t,s)\mathcal F_j(s;t,\eta)\|
&\leq
C_{\mathcal K,j}
e^{-\beta s}
\left[
\frac{\mu(s)}{\mu(t)}
\right]^{(j-1)b}\\
&\leq
\widetilde C_{\mathcal K,j}
e^{-[\beta-(j-1)b]s}
(1+s)^{(j-1)b\kappa},
\end{align*}
for $s\geq t$. The stable component is only present when $0\leq s\leq t$, which is a bounded interval when $(t,\eta)$ varies in a compact set. Hence, condition \ref{R2} holds for every $1\leq j\leq r$ provided
\begin{equation*}
\beta>(r-1)b.
\end{equation*}

We conclude that if
$$
\beta>\max\{b,(r-1)b\},
$$
and conditions \eqref{mixed-top-small} and \eqref{mixed-diff-small} are satisfied, then all conditions \ref{H1}--\ref{H7} and \ref{R1}--\ref{R3} hold. Therefore, systems \eqref{example-mixed-linear} and \eqref{nolin}, with the perturbation given by \eqref{example-mixed-f}, are $C^r$--continuously topologically equivalent on $\mathbb R^+$.

For instance, one may take
$$
a=b=\kappa=1,
\qquad
\varepsilon=\frac{1}{16},
\qquad
\beta=r+1.
$$
With this choice, all the previous inequalities are satisfied.
\end{example}

\section{Final remarks}

The main point of the manuscript is that the differentiability of the topological equivalence can be obtained without assuming that the nonlinear perturbation or the nonlinear solutions are bounded. The required control is expressed directly through the Green operator acting on the successive variational equations. This formulation permits the growth of the nonlinear solutions and their derivatives with respect to the initial conditions, provided that such growth is compensated by the decay of the nonlinear terms and by the hyperbolic estimates contained in the Green function.

The example above shows that the assumptions are nonempty and can be checked explicitly for an unbounded perturbation. In particular, the topological smallness condition and the condition guaranteeing the invertibility of $D_\eta G$ play different roles and should be stated separately.

\end{document}